\newtheorem{Theorem}{Theorem}
\newtheorem{Cor}{Corollary}
 \newtheorem{Lemma}{Lemma}
 \newtheorem{Proposition}{Proposition}
 \theoremstyle{definition}
 \theoremstyle{remark}
 \newtheorem{Remark}[Lemma]{Remark}
 \numberwithin{equation}{subsection}
\begin{document}
\title[MULTIDIMENSIONAL SHIFTS AND FINITE MATRICES]{MULTIDIMENSIONAL SHIFTS AND FINITE MATRICES}%
\author{Puneet Sharma AND Dileep Kumar}
\address{Department of Mathematics, I.I.T. Jodhpur, Old Residency Road, Ratanada, Jodhpur-342011, INDIA}%
\email{puneet@iitj.ac.in, pg201383502@iitj.ac.in}%


\subjclass{37B10, 37B50}

\keywords{multidimensional shift spaces, shifts of finite type,
periodicity in multidimensional shifts of finite type}

\begin{abstract}
Let $X$ be a $2$-dimensional subshift of finite type generated by a finite set of forbidden blocks (of finite size). We give an algorithm for generating the elements of the shift space using sequence of finite matrices (of increasing size). We prove that the sequence generated yields precisely the elements of the shift space $X$ and hence characterizes the elements of the shift space $X$. We extend our investigations to a general $d$-dimensional shift of finite type.  In the process, we prove that that elements of $d$-dimensional shift of finite type can be characterized by a sequence of finite matrices (of increasing size).
\end{abstract}
\maketitle

\section{INTRODUCTION}

Dynamical systems have been long used to determine the long term behaviour of various natural processes occurring around us. Many systems around us have been successfully modelled using the theory of dynamical systems and long term behavior for such systems has been approximated with desired accuracy. While logistic system has been used to model population dynamics of many species, systems like lorenz model have been successfully used to model weather prediction. The theory has found applications in areas such as climate control, population dynamics and computational neuroscience \cite{lor,iz,hsig}. Consequently, theoretical studies in the field of dynamical systems have gained attention and many interesting results have been obtained. Such studies have not only enriched the literature but has also provided greater insight into the theory of dynamical systems. In recent times, symbolic dynamical systems have been used to investigate general theory of dynamical systems. The simpler structure and easier visualization makes it an effective tool to investigate the dynamics of the underlying system. While Jacques Hadamard applied the theory of symbolic dynamics to study the geodesic flows on surfaces of negative curvature \cite{had}, Claude Shennon used symbolic dynamics to develop the mathematical theory of communication systems \cite{shanon}. In recent times, the topic has found applications in areas like data storage, data transmission, control networks and modelling of gene networks\cite{lind1,song,stro}. In recent times, multidimensional symbolic dynamics has been topic of interest and has attracted attention of several researchers across the globe. As many of the systems cannot be modelled using one dimensional symbolic systems, multidimensional symbolic systems can been used to model dynamically complex systems and investigate dynamical complexities of a system that cannot be investigated using one dimensional systems. The topic has found applications in areas like cohomology theory and tiling of finite dimensional planes. \\

In one of the early works, Berger investigated multidimensional subshifts of finite type over finite number of symbols. He proved that for a multidimensional subshift, it is algorithmically undecidable whether an allowed partial configuration can be extended to a point in the multidimensional shift space \cite{ber}. In \cite{rob}, the author  provides an example to prove that a multidimensional shift space may not contain any periodic points. The investigations motivated further research in this area and many interesting results have been obtained \cite{quas,ban,sc,beal,boy1,hoch1,hoch4,sam}. In \cite{quas}, authors proved that multidimensional shifts of finite type having positive topological entropy cannot be minimal. In particular, they proved that any subshift of finite type with positive topological entropy contains a subshift which is not of finite type and hence contains infinitely many subshifts of finite type \cite{quas}. In \cite{ban}, the authors investigated mixing properties of multidimensional shift of finite type. In \cite{boy1}, authors investigate mixing $\mathbb{Z}^d$ shifts of finite type and sofic shifts with large entropy. They give examples to prove that while there exists $\mathbb{Z}^d$ mixing systems such that no non-trivial full shift is a factor for such systems, there exists sofic systems such that the only minimal subsystem is a singleton. In \cite{hoch1}, authors prove that a real number the set of entropies for $\mathbb{Z}^d$ shifts of finite type ($d\geq 2$) coincides with the set of the infimum of all possible recursive sequences of rational numbers. In \cite{hoch4}, the author improved the result and proved that $h\geq 0$ is the entropy of a $\mathbb{Z}^d$ effective dynamical system if and only if it is the lim inf of a recursive sequence of rational numbers. The problem of determining the class of shifts having a dense set of periodic points is still open. In \cite{sam}, the author proves that for a two dimensional shift sopace, strongly irreducible shifts of finite type have dense set of periodic points. However, the problem is still open for multidimensional shift spaces of dimension greater than two.  \\

Let $A = \{a_i : i \in I\}$ be a finite set and let $d$ be a positive integer. Let the set $A$ be equipped with the discrete metric and let $A^{\mathbb{Z}^d}$, the collection of all functions $c : \mathbb{Z}^d \rightarrow A$ be equipped with the product topology. Any such function $c$ is called a configuration over $A$. The function $\mathcal{D} : A^{\mathbb{Z}^d} \times A^{\mathbb{Z}^d} \rightarrow \mathbb{R}^+$ be defined as $\mathcal{D} (x,y) = \frac{1}{n+1}$, where $n$ is the least non-negative integer such that $x \neq y$ in $R_n = [-n,n]^d$, is a metric on $A^{\mathbb{Z}^d}$ and generates the product topology. For any $a\in \mathbb{Z}^d$, the map $\sigma_a : A^{\mathbb{Z}^d} \rightarrow A^{\mathbb{Z}^d}$ defined as $(\sigma_a (x))(k)= x(k+a)$ is a $d$-dimensional shift and is a homeomorphism. For any $a,b\in \mathbb{Z}^d$, $\sigma_a \circ \sigma_b = \sigma_b \circ \sigma_a$ and hence $\mathbb{Z}^d$ acts on $A^{\mathbb{Z}^d}$ through commuting homeomorphisms. A set $X \subseteq A^{\mathbb{Z}^d}$ is $\sigma_a$-invariant if $\sigma_a(X)\subseteq X$. Any set $X \subseteq A^{\mathbb{Z}^d}$ is shift-invariant if it is invariant under $\sigma_a$ for all $a \in {\mathbb{Z}}^d$. A non-empty, closed shift invariant subset of $A^{\mathbb{Z}^d}$ is called a shift space. If $Y \subseteq X$ is a nonempty, closed shift invariant subset of $X$, then $Y$ is called a subshift of $X$. A closed shift invariant subset $X\subset A^{\mathbb{Z}^d}$ is a shift of finite type if and only if there exists a finite set $\mathbb{F}\subset \mathbb{Z}^d$ and a set $\mathbb{P}\subset A^{\mathbb{F}}$ such that $X=\{x \in A^{\mathbb{Z}^d} : (\pi_{\mathbb{F}}\circ \sigma_n)(x)\in \mathbb{P} \text{~~for any~~} n\in \mathbb{Z}^d \}$, where $\pi_{\mathbb{F}}(x)$ is the restriction of $x\in A^{\mathbb{Z}^d}$ to $\mathbb{F}$. For any nonempty $S\subset \mathbb{Z}^d$, an element in $A^S$ is called a pattern over $S$. A pattern is said to be finite if it is defined over a finite subset of $\mathbb{Z}^d$. A pattern $q$ over $S$ is said to be an extension of the pattern $p$ over $T$ if $T\subset S$ and $q|_T=p$. The extension $q$ is said to be proper extension if $S\cap Bd(T)=\phi$, where $Bd(T)$ denotes the boundary of $T$. Let $\mathcal{F}$ be a given set of finite patterns (possibly over different subsets of $\mathbb{Z}^d$) and let $X=\overline{\{x\in A^{\mathbb{Z}^d}: \text{any pattern from~~} \mathcal{F} \text{~~does not appear in~~} x \}}$. The set $X$ equivalently defines a subshift of $\mathbb{Z}^d$ generated by set of forbidden patterns $\mathcal{F}$. If the set $\mathcal{F}$ is a finite set of finite patterns, we say that the shift space $X$ is a shift of finite type. We say that a pattern is allowed if it is not an extension of any forbidden pattern. We denote the shift space generated by the set of forbidden patterns $\mathcal{F}$ by $X_{\mathcal{F}}$. Two forbidden sets $\mathcal{F}_1$ and $\mathcal{F}_2$ are said to be equivalent if they generate the same shift space, i.e. $X_{\mathcal{F}_1}= X_{\mathcal{F}_2}$. A forbidden set $\mathcal{F}$ of patterns is called minimal for the shift space $X$ if $\mathcal{F}$ is the set with least cardinality such that $X=X_{\mathcal{F}}$. It is worth mentioning that a shift space $X$ is of finite type if and only if its minimal forbidden set is a finite set of finite patterns.\\




For any two distinct $k$-tuples $(a_1,a_2,\ldots,a_k), (b_1,b_2,\ldots,b_k)$ over $\mathbb{R}$, we say that $(a_1,a_2,\ldots,a_k)<(b_1,b_2,\ldots,b_k)$ if $a_r<b_r$ where $r=\min\{i : a_i\neq b_i\}$. The relation defines a total order on $\mathbb{R}^k$ and is known as the dictionary order on $\mathbb{R}^k$. Let $O_{\mathcal{H}}$ (and $O_{\mathcal{V}}$) be the restriction of the dictionary order on set of $2\times 2$ matrices, when any matrix $\left(\begin{array}{cc} a & b \\ c & d \end{array}\right)$ is represented as $(a,b,c,d)$ (and $(a,c,b,d)$ respectively). Analogously, let $O_{\mathcal{H}}$ and $O_{\mathcal{V}}$ be the orders defined on the set of $r\times s$ matrices obtained by restricting dictionary order, when any matrix is represented as a tuple by reading entries row-wise (left to right) and column-wise (top to bottom) respectively.\\

Let $M$ be a $k^2\times k^2$ matrix of the form \\

\centerline{{$M = \left(%
\begin{array}{cccc}
  M_{11} & M_{12} & \ldots & M_{1k} \\
  M_{21} & M_{22} & \ldots & M_{2k} \\
  \vdots & \vdots &        & \vdots  \\
  M_{k1} & M_{k2} & \ldots & M_{kk} \\
\end{array}%
\right)$}}
\vskip 0.5cm

where each $M_{ij}$ is a $k\times k$ matrix. For a pair of matrices $(M_{ij},M)$, assign a $1\times k^4$ matrix $(M_{ij}\bigotimes M)$ as \\

\centerline{$(M_{ij}\bigotimes M)_{1r}= (M_{ij})_{\alpha\beta} (M_{\alpha\beta})_{\gamma\delta}$}
\vskip 0.25cm
where $\alpha,\beta,\gamma,\delta\in\{1,2,\ldots,k\}$ and $(\alpha,\beta,\gamma,\delta)$ is the unique solution to the equation $r= \alpha k^3+\beta k^2+\gamma k +\delta -(k^3+k^2+k)$. \\

It may be noted that the if the $1\times k^4$ row vector is visualised as $k^2$ groups of $k^2$ elements, then the first $k^2$ entries of the resultant $(M_{ij}\bigotimes M)$ are obtained by multiplying $(M_{ij})_{11}$ with $k^2$ entries of $M_{11}$ (entries read rowwise), next $k^2$ entries are obtained by multiplying $(M_{ij})_{12}$ with $k^2$ entries of $M_{12}$ and so on. In general, for $n\in\{1,2,\ldots,k^2\}$, the $n$-th group is determined by multiplying $(M_{ij})_{wz}$ by $k^2$ entries of $M_{wz}$ where $(w,z)\in\{1,2,\ldots,k\}^2$ is unique solution to the equation $n=(w-1)k+z$. Consequently, the operation $\bigotimes$ is well defined and assigns a $k^4$ row vector for any input pair $(M_{ij},M)$. It may be noted that although the operation is defined for the pair $(M_{ij},M)$ where $M_{ij}$ is submatrix of $M$, the operation is well defined for any pair $(P,Q)$ where $P$ and $Q$ are square matrices of order $k$ and $k^2$ respectively.\\

In this paper, we address the problem of characterizing the elements of a two-dimensional shift of finite type generated by a given set of forbidden patterns. In particular, given a two-dimensional shift space characterized by a finite set of forbidden blocks, we provide an algorithm for characterizing the elements of the shift space $X$ using a sequence of finite matrices (of increasing size). The algorithm generates all possible elements of the shift space using the matrices generated and hence determines the shift space completely. We generalize our algorithm to a $d$-dimensional shift of finite type. In the process, we prove that that elements of $d$-dimensional shift of finite type can be characterized by a sequence of finite matrices of increasing size. \\

\section{Main Results}

\begin{Proposition}\label{sb}
$X$ is a $d$-dimensional shift of finite type $\implies$  there exists a set $\mathcal{C}$ of $d$-dimensional cubes such that $X=X_{\mathcal{C}}$.
\end{Proposition}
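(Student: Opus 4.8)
The plan is to start from the definition of a shift of finite type and repackage its finite set of forbidden patterns as a set of forbidden cubes generating the same space. By hypothesis $X=X_{\mathcal{F}}$ for some finite set $\mathcal{F}=\{p_1,\ldots,p_m\}$ of finite patterns, where each $p_i$ is defined over a finite support $S_i\subset\mathbb{Z}^d$. First I would fix a single cube large enough to house every forbidden pattern: since $m$ is finite and each $S_i$ is a finite subset of $\mathbb{Z}^d$, there is an integer $N$ such that every $S_i$ admits a translate contained in the cube $Q=\{0,1,\ldots,N-1\}^d$. This choice of $N$ is the only genuine design decision in the construction.

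Next I would define the candidate set of forbidden cubes. Let $\mathcal{C}$ be the collection of all patterns $c\in A^{Q}$ with the property that $c$ is an extension of some translate of some $p_i$; explicitly, $c\in\mathcal{C}$ iff there exist $i\in\{1,\ldots,m\}$ and $v\in\mathbb{Z}^d$ with $S_i+v\subseteq Q$ and $c(k)=p_i(k-v)$ for all $k\in S_i+v$. Because $A$ and $Q$ are both finite, $A^{Q}$ is finite, so $\mathcal{C}$ is in fact a finite set of cubes (the resulting description is again of finite type, which is a bonus but not demanded by the statement). Each element of $\mathcal{C}$ is by construction a $d$-dimensional cube pattern, so it remains only to verify $X_{\mathcal{C}}=X$.

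The verification splits into two inclusions, and it is cleanest to argue at the level of the un-closed configuration sets $Y_{\mathcal{F}}=\{x : \text{no } p_i \text{ appears in } x\}$ and $Y_{\mathcal{C}}=\{x : \text{no } c\in\mathcal{C} \text{ appears in } x\}$, after which taking closures yields $X_{\mathcal{F}}=X_{\mathcal{C}}$. For $Y_{\mathcal{F}}\subseteq Y_{\mathcal{C}}$: if some cube $c\in\mathcal{C}$ appeared in $x$, then by the definition of $\mathcal{C}$ a translate of some $p_i$ would appear inside that occurrence of $c$, hence in $x$, contradicting $x\in Y_{\mathcal{F}}$. For the reverse inclusion $Y_{\mathcal{C}}\subseteq Y_{\mathcal{F}}$: if a translate of some $p_i$ appeared in $x$ at position $v$, then, because $S_i$ fits inside a cube of side $N$, I can slide a copy of $Q$ over that occurrence; the restriction of $x$ to that window is a cube pattern extending the occurrence of $p_i$, hence it lies in $\mathcal{C}$ and appears in $x$, contradicting $x\in Y_{\mathcal{C}}$.

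The argument is essentially bookkeeping, so I do not expect a serious obstacle; the one place demanding care is the interaction between translations and the ``appears in'' relation. One must ensure that the window $Q$ can always be positioned to contain a given occurrence of $p_i$ (which is exactly what the choice of $N$ guarantees) and that the extension/restriction indices line up correctly under the shift maps $\sigma_a$. Keeping the two occurrence relations, pattern-appears versus cube-appears, properly aligned is the main thing to get right.
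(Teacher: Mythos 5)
Your proposal is correct and follows essentially the same route as the paper: both replace the finite forbidden set $\mathcal{F}$ by the set of all cube-shaped extensions of its patterns over a fixed cube large enough to contain every pattern's support, and conclude the two forbidden sets generate the same space. If anything, your version is more careful than the paper's, since you verify both inclusions of the key equivalence (forbidding $p$ equals forbidding all its cube extensions) explicitly, whereas the paper asserts it in one line.
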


\begin{proof}
Let $X$ be a shift of finite type and let $\mathcal{F}$ be the minimal forbidden set of patterns generating the shift space $X$. Then, $\mathcal{F}$ contains finitely many patterns defined over finite subsets of $\mathbb{Z}^d$. For any $p\in\mathcal{F}$, let $l_p^i$ be the length of the pattern $p$ in the i-th direction. Let $l_p = \max\{l_P^i : i=1,2,\ldots,d\}$ denote the width of the pattern $p$ and let $l= \max\{ l_p : p \in \mathcal{F}\}$. Let $\mathbb{E}_{\mathcal{F}}$ denote the set of extensions of patterns in $\mathcal{F}$ and $\mathbb{C}_l$ be the collection of $d$-dimensional cubes of length $l$. Note that if $p$ is a pattern with width $m$, forbidding a pattern $p$ for $X$ is equivalent to forbidding all extensions $q$ of $p$ in $\mathcal{C}_m$. Consequently, each pattern in the forbidden set of width $l$ can be replaced by an equivalent forbidden set of cubes of length $l$ and $\mathcal{C}= \mathbb{C}_l \cap \mathbb{E}_{\mathcal{F}}$ is an equivalent forbidden set for the shift space $X$. Consequently, $X=X_{\mathcal{C}}$ and the proof is complete.
\end{proof}

\begin{Remark}
The above result establishes that every $d$-dimensional shift of finite type can equivalently be generated by a set of cubes of fixed finite length. Such a modification generates an equivalent forbidden set which in general is not minimal. The above result establishes the existence of an equivalent forbidden set by considering all the cubes which are extension of the set of patterns in $\mathcal{F}$. It may be noted that the cardinality of the new set can be further optimized by considering only those cubes which are not proper extensions of patterns in $\mathcal{F}$ (but are of same size $l$). Such a consideration reduces the cardinality of the forbidden set further and hence reduces the complexity of the original system. It is worth mentioning that the forbidden set obtained by stated method is still not minimal. However, the $d$-dimensional cubes generated by such a mechanism are of same size which can further be used for generating (characterizing) the elements of the shift space $X$. We say that a shift of finite type $X$ is generated by cubes of length $l$ if there exists a set of cubes $\mathcal{C}$ of length $l$ such that $X=X_{\mathcal{C}}$.
\end{Remark}

\begin{Theorem}\label{2d}
For every $2$-dimensional shift of finite type, there exists a sequence of finite matrices characterizing the elements of $X$.
\end{Theorem}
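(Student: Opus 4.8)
The plan is to reduce the problem, via Proposition~\ref{sb}, to a single gluing problem over squares of a fixed size, and then to encode the gluing data recursively in the matrix sequence. First I would apply Proposition~\ref{sb} to replace the given forbidden set by an equivalent finite set of forbidden squares of a fixed side length $l$; equivalently, I fix the finite collection $\mathcal{L}$ of \emph{allowed} $l\times l$ squares over $A$, where $k=|A|$. Since a point $x\in A^{\mathbb{Z}^2}$ lies in $X$ precisely when every $l\times l$ window of $x$ belongs to $\mathcal{L}$, it suffices to construct matrices whose supports record exactly those square configurations all of whose windows are allowed, together with enough edge information to glue such squares to one another.

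The base of the construction fixes a canonical labelling. Using the orders $O_{\mathcal{H}}$ and $O_{\mathcal{V}}$ to enumerate the patterns that index rows and columns, I would define an initial matrix $M_1$ whose $(i,j)$ entry is $1$ exactly when the configurations labelled $i$ and $j$ overlap consistently and their union is again allowed. The point of carrying both orders is that $O_{\mathcal{H}}$ (row-wise reading) tracks horizontal seams---the right edge of one block against the left edge of its neighbour---while $O_{\mathcal{V}}$ (column-wise reading) tracks vertical seams, so that a single matrix simultaneously records adjacency in both coordinate directions.

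I would then build the sequence recursively using the operation $\bigotimes$. Assuming inductively that the support of $M_n$ is in bijection with the allowed configurations on a square of side $s_n$ (with $s_n\to\infty$) and faithfully records their edge data, I would pass to $M_{n+1}$ by forming, as in the definition of $\bigotimes$, the products of a coarse-scale compatibility entry with the fine-scale block it indexes. Such a product is nonzero exactly when the sub-squares being assembled agree along every shared seam, so that their union is itself a legal configuration; this is precisely the point where the two orders must be kept aligned so that horizontal and vertical overlaps are enforced at once. The induction then yields that the support of $M_{n+1}$ is exactly the set of allowed squares of the larger side length, with neither spurious nor missing entries.

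Finally I would establish the characterization. The forward direction is immediate: if $x\in X$ then every square restriction of $x$ has all its windows in $\mathcal{L}$ and is therefore recorded by the appropriate $M_n$. For the converse I would appeal to compactness of $A^{\mathbb{Z}^2}$: a coherent family of configurations drawn from the supports of the $M_n$, each extending the previous one, determines a single point $x$, and since every $l\times l$ window of $x$ sits inside some sufficiently large recorded square, $x$ avoids all forbidden patterns and so $x\in X$. I expect the main obstacle to be the inductive step: proving that $\bigotimes$ reproduces the gluing conditions exactly---introducing no assembled square with an inconsistent interior seam (no false positives) and discarding no genuinely allowed larger square (no false negatives). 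The delicate bookkeeping is the simultaneous management of $O_{\mathcal{H}}$ and $O_{\mathcal{V}}$, ensuring that at every scale the horizontal and vertical overlaps of the sub-blocks are matched consistently; this is the technical heart on which the whole construction depends.
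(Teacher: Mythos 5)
Your overall strategy coincides with the paper's (reduce via Proposition~\ref{sb} to forbidden squares of a fixed side $l$, grow allowed blocks by recursive doubling using $\bigotimes$, conclude by compactness), but the central structural device is wrong as stated: a single $0$-$1$ matrix $M_n$ per scale cannot ``simultaneously record adjacency in both coordinate directions,'' and the orders $O_{\mathcal{H}}$ and $O_{\mathcal{V}}$ carry no seam information whatsoever --- they are merely two enumerations of the same index set. An entry $(i,j)$ of one matrix encodes one binary relation on blocks, while horizontal and vertical compatibility are two distinct relations; re-reading the index tuples row-wise versus column-wise does not change the entries. The paper accordingly interleaves \emph{two} families: $V^n$, indexed by allowed squares of side $2^n l$, records vertical compatibility, and its nonzero entries name the $2^{n+1}l\times 2^n l$ rectangles indexing $H^n$; $H^n$ records horizontal compatibility of those rectangles, and its nonzero entries name the squares of side $2^{n+1}l$ indexing $V^{n+1}$. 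The true role of the two orders is bookkeeping for the recursion: $V^{n+1}$ is computed with indices ordered by $O_{\mathcal{H}}$ and then re-ordered by $O_{\mathcal{V}}$ precisely so that its block submatrices $S^{n+1}_{ij}$ collect the rectangles sharing a fixed left half, which is what makes the row of $H^{n+1}$ computable as $S^{n+1}_{(uv)(wz)}\bigotimes V^{n+1}$ (dually for the blocks $R^n_{ij}$ of $H^n$ and the rows of $V^{n+1}$). Without this alternation between directions, your recursion has no mechanism for producing squares from rectangles, and a single matrix recording only one adjacency relation would characterize strips, not the plane.

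You also defer, rather than supply, the step you yourself call the technical heart: why seam-checking at the block scale introduces no false positives. The paper's argument is concrete: after Proposition~\ref{sb} every forbidden pattern fits inside an $l\times l$ square, so every $l\times l$ window of the $4l\times 2l$ rectangle obtained by placing a $2l$-square $P$ above a $2l$-square $Q$ lies entirely in $P$, entirely in $Q$, or entirely in the middle $2l$-square formed by the bottom half of $P$ and the top half of $Q$; hence ``$P$, $Q$, and the middle square are all allowed'' certifies the whole stack, and this three-term condition is exactly what the product $(R^0_{rs})_{uv}(R^0_{uv})_{wz}$ inside $\bigotimes$ computes, under allowedness of the row index (with the analogous decomposition in the horizontal steps). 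Your phrase ``overlap consistently and their union is again allowed'' gestures at this but never pins down the window decomposition, which is the only thing ruling out forbidden patterns straddling a seam at offsets not aligned with the $l$-grid; moreover, the mechanism you offer in its place (keeping $O_{\mathcal{H}}$ and $O_{\mathcal{V}}$ ``aligned'') cannot do that job, as noted above. A minor slip: $k$ should be the number of $l\times l$ squares (at most $|A|^{l^2}$), not the alphabet size $|A|$. Your final compactness argument for the converse direction is fine and matches the paper's limiting step.
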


\begin{proof}
Let $X$ be the shift space generated by forbidding given set of patterns. In this proof, we provide an algorithm for generating finite matrices (of increasing size) characterizing the existence of arbitrary large squares (rectangles) allowed for the shift space $X$. The sequence of matrices in turn characterizes the elements of the shift space $X$ and thus addresses the non-emptiness problem for the shift space generated by a given set of forbidden blocks. We now describe the proposed algorith below: \\

{\bf Step 1: Computation of Generating Matrices \\}

Let $X$ be a $2$-dimensional shift of finite type and let $\mathcal{P}$ be the finite set of forbidden patterns characterizing the shift space $X$. By Lemma \ref{sb}, there exists a set of forbidden squares $\mathcal{S}$ (all of same size, say $l$) generating the shift space $X$.  Let $B_1,B_2,\ldots,B_k$ be the the collection of all squares of size $~~l$. Let $V_0$ be the $k\times k$ matrix (indexed by $B_1,B2,\ldots,B_k$)  indicating vertical compatibility of the squares $B_i$. For notational convenience, let the index $B_i$ be denoted by $i, ~~\forall i=1,2,\ldots,k$. Then,

\begin{center}
$V^0_{ij} = \left\{%
\begin{array}{ll}
    1, & \hbox{$\left(\begin{array}{c} B_i \\ B_j \end{array}\right) \text{~~is allowed in~~} X$} \\
    0  & \hbox{\text{~~otherwise~~}} \\
\end{array}%
\right.$
\end{center}

Let $H^0$ be a matrix indexed by the set of rectangles of size $2l\times l$ indicating horizontal compatibility of rectangles of size $2l\times l$. As any rectangle of size $2l\times l$ is of the form $\left(\begin{array}{c} B_i \\ B_j \end{array}\right)$, $H^0$ is a $k^2\times k^2$ matrix indexed by rectangles of the form  $\left(\begin{array}{c} B_i \\ B_j \end{array}\right)$. For notational convenience, let the index $\left(\begin{array}{c} B_i \\ B_j \end{array}\right)$ be denoted by $(ij), ~~\forall i,j\in \{1,2,\ldots,k\}$. Then,

\begin{center}
$H^0_{(ij)(rs)} = \left\{%
\begin{array}{ll}
    1, & \hbox{$\left(\begin{array}{cc} B_i & B_r \\ B_j & B_s \end{array}\right) \text{~~is allowed in~~} X$} \\
    0  & \hbox{\text{~~otherwise~~}} \\
\end{array}%
\right.$
\end{center}

It may be noted that while entries of the matrix $V_0$ indicate the vertical compatibility of squares of length $l$, entries of the matrix $H_0$ characterize the existence of squares of side $2l$. In the next step, we use the matrices computed to verify the validity of rectangles (squares) of size $4l\times 2l$ ($4l\times 4l$).\\

{\bf Step 2: One Step Extension: Computing $V_1$ and $H_1$ \\}

In the first step, we computed the matrices $V_0$ and $H_0$ characterizing existence of rectangles and squares of size $2l\times l$ and $2l\times 2l$ respectively. To add clarity to the structure of the matrix $H^0$, let the index set of $H^0$ follow the dictionary order, i.e., let the index set of $H_0$ be ordered as $\{(11),(12)\ldots,(1k),(21),(22),\ldots,(2k),\ldots,(k1),(k2), \ldots, (kk)\}$. Consequently, $H^0$ can be viewed as a block matrix of the form \\

\centerline{{$H^0 = \left(%
\begin{array}{cccc}
  R^0_{11} & R^0_{12} & \ldots & R^0_{1k} \\
  R^0_{21} & R^0_{22} & \ldots & R^0_{2k} \\
  \vdots & \vdots &        & \vdots  \\
  R^0_{k1} & R^0_{k2} & \ldots & R^0_{kk} \\
\end{array}%
\right)$}}
\vskip 0.5cm

where $R^0_{ij}$ is a $k\times k$ matrix whose entries characterize the squares of the form $\left(\begin{array}{cc} B_i & B_j \\ B_r & B_s \end{array}\right)$. More precisely, $(r,s)$-th entry of $R^0_{ij}$ is $1$ if and only if $\left(\begin{array}{cc} B_i & B_j \\ B_r & B_s \end{array}\right)$ is allowed. Equivalently, the entries of the matrices $R^0_{ij}$ characterize all squares of size $2l$ whose top half (which is rectangle of size $l\times 2l$) is $(B_i~~ B_j)$.\\

Let $V^1$ be $k^4\times k^4$ matrix indexed by squares of size $2l$ indicating vertical compatibility (of the squares of size $2l$). As any square of size $2l$ is of the form $\left(\begin{array}{cc} B_i & B_j \\ B_r & B_s \end{array}\right)$, $V^1$ is equivalently indexed by the squares of the form $\left(\begin{array}{cc} B_i & B_j \\ B_r & B_s \end{array}\right)$. For notational convenience, let the index $\left(\begin{array}{cc} B_i & B_j \\ B_r & B_s \end{array}\right)$ be denoted by $(ijrs)~~\forall 1\leq i,j,r,s\leq k$. Consequently,

\begin{center}
$V^1_{(ijrs)(uvwz)} = \left\{%
\begin{array}{cc}
    1, & \hbox{$\left(\begin{array}{cc} B_i & B_j \\ B_r & B_s \\ B_u & B_v \\ B_w & B_z \end{array}\right) \text{~~is allowed in~~} X$;} \\
    0  & \hbox{\text{~~otherwise~~};} \\
\end{array}%
\right.$
\end{center}

\noindent For computational purposes, let the index set of $V^1$ be ordered using order $O_{\mathcal{H}}$. In particular, the index set of $V_1$ follows the following order:\\

\noindent $(1111),(1112),\ldots,(111k),(1121),\ldots,(112k),\ldots,(11k1),\ldots,(11kk)\\
(1211),(1212),\ldots,(121k),(1221),\ldots,(122k),\ldots,(12k1),\ldots,(12kk)\\
\vdots \\
(1k11),(1k12),\ldots,(1k1k),(1k21),\ldots,(1k2k),\ldots,(1kk1),\ldots,(1kkk)\\
(2111),(2112),\ldots,(211k),(2121),\ldots,(212k),\ldots,(21k1),\ldots,(21kk)\\
(2211),(2212),\ldots,(221k),(2221),\ldots,(222k),\ldots,(22k1),\ldots,(22kk)\\
\vdots \\
(2k11),(2k12),\ldots,(2k1k),(2k21),\ldots,(2k2k),\ldots,(2kk1),\ldots,(2kkk)\\
\vdots \\
(k111),(k112),\ldots,(k11k),(k121),\ldots,(k12k),\ldots,(k1k1),\ldots,(k1kk)\\
(k211),(k212),\ldots,(k21k),(k221),\ldots,(k22k),\ldots,(k2k1),\ldots,(k2kk)\\
\vdots \\
(kk11),(kk12),\ldots,(kk1k),(kk21),\ldots,(kk2k),\ldots,(kkk1),\ldots,(kkkk)$.\\

It may be noted that for any index $\left(\begin{array}{cc} B_i & B_j \\ B_r & B_s \end{array}\right)$ for the matrix $V^1$, the corresponding row can be determined as follows:
\begin{enumerate}
\item If $\left(\begin{array}{cc} B_i & B_j \\ B_r & B_s \end{array}\right)$ is forbidden, then the corresponding row is zero row.\\

\item If $\left(\begin{array}{cc} B_i & B_j \\ B_r & B_s \end{array}\right)$ is allowed then the corresponding row is $R^0_{rs}\bigotimes H^0$.\\
\end{enumerate}

\textbf{Finally let the index set of $V^1$ be ordered using the order $O_{\mathcal{V}}$.}\\

Note that if $\left(\begin{array}{cc} B_i & B_j \\ B_r & B_s \end{array}\right)$ is not allowed then it cannot be vertically aligned with another square to obtain a allowed pattern and hence the corresponding row is the zero row. Further, as the size of the squares $B_k$ is determined by maximum possible length or breadth among patterns from $\mathcal{S}$, any pattern from $\mathcal{S}$ cannot be spread beyond a square of size $2l$. Thus, validity of any given pattern can be verified by examining the validity of all the squares of size $2l$ in the given pattern. Consequently, $\left(\begin{array}{cc} B_i & B_j \\ B_r & B_s \end{array}\right)$  is vertically compatible with $\left(\begin{array}{cc} B_u & B_v \\ B_w & B_z \end{array}\right)$ if and only if $\left(\begin{array}{cc} B_i & B_j \\ B_r & B_s \end{array}\right)$ , $\left(\begin{array}{cc} B_r & B_s \\ B_u & B_v \end{array}\right)$ and $\left(\begin{array}{cc} B_u & B_v \\ B_w & B_z \end{array}\right)$ are allowed. As $(R^0_{rs})_{uv}$ and $(R^0_{uv})_{wz}$ are $1$ if and only if $\left(\begin{array}{cc} B_r & B_s \\ B_u & B_v \end{array}\right)$ and $\left(\begin{array}{cc} B_u & B_v \\ B_w & B_z \end{array}\right)$ are allowed, their product characterizes the validity of the block $\left(\begin{array}{cc} B_i & B_j \\ B_r & B_s \\ B_u & B_v\\ B_w & B_z \end{array}\right)$ under the validity of $\left(\begin{array}{cc} B_i & B_j \\ B_r & B_s \end{array}\right)$ and hence $(R^0_{rs})\bigotimes H^0$ is the row corresponding to $\left(\begin{array}{cc} B_i & B_j \\ B_r & B_s \end{array}\right)$ (under the validity of $\left(\begin{array}{cc} B_i & B_j \\ B_r & B_s \end{array}\right)$). Thus, the constructed matrix indeed characterizes the vertical compatibility of the squares of size $2l$ (and hence characterizes all rectangles of size $4l\times 2l$ allowed for the shift space $X$).\\

Let us investigate the structure of $V_1$ in detail. Note that the index set of $V^1$ (now ordered using the order $O_{\mathcal{V}}$) can be viewed as the block matrix of the form\\

\centerline{{$V^1 = \left(%
\begin{array}{cccc}
  S^0_{11} & S^0_{12} & \ldots & S^0_{1k^2} \\
  S^0_{21} & S^0_{22} & \ldots & S^0_{2k^2} \\
  \vdots & \vdots &        & \vdots  \\
  S^0_{k^21} & S^0_{k^22} & \ldots & S^0_{k^2k^2} \\
\end{array}%
\right)$}}
\vskip 0.5cm
where each $S^0_{ij}$ is a $k^2\times k^2$ matrix. Note that for any $i\in\{1,2,\ldots,k^2\}$, there exists unique pair $(p_i,q_i)$, ($p_i\in\{0,2,\ldots,k-1\},~~ q_i\in\{1,2,\ldots,k\}$) such that $i=kp_i+q_i$. Identifying $i$ with $(p_i+1~~q_i)~~\forall i=1,2,\ldots,k^2$, the matrix $V^1$ can be represented as  \\

\centerline{{$V^1 = \left(%
\begin{array}{ccccccccccccc}
S^0_{(11)(11)} & S^0_{(11)(12)} & \ldots & S^0_{(11)(1k)} & S^0_{(11)(21)} & S^0_{(11)(22)} & \ldots & S^0_{(11)(2k)} & \ldots & S^0_{(11)(k1)} & S^0_{(11)(k2)} & \ldots & S^0_{(11)(kk)}\\
S^0_{(12)(11)} & S^0_{(12)(12)} & \ldots & S^0_{(12)(1k)} & S^0_{(12)(21)} & S^0_{(12)(22)} & \ldots & S^0_{(12)(2k)} & \ldots & S^0_{(12)(k1)} & S^0_{(12)(k2)} & \ldots & S^0_{(12)(kk)}\\
  \vdots & \vdots &    & \vdots & \vdots & \vdots &   & \vdots &  & \vdots & \vdots &   & \vdots \\
S^0_{(1k)(11)} & S^0_{(1k)(12)} & \ldots & S^0_{(1k)(1k)} & S^0_{(1k)(21)} & S^0_{(1k)(22)} & \ldots & S^0_{(1k)(2k)} & \ldots & S^0_{(1k)(k1)} & S^0_{(1k)(k2)} & \ldots & S^0_{(1k)(kk)}\\
S^0_{(21)(11)} & S^0_{(21)(12)} & \ldots & S^0_{(21)(1k)} & S^0_{(21)(21)} & S^0_{(21)(22)} & \ldots & S^0_{(21)(2k)} & \ldots & S^0_{(21)(k1)} & S^0_{(21)(k2)} & \ldots & S^0_{(21)(kk)}\\
S^0_{(22)(11)} & S^0_{(22)(12)} & \ldots & S^0_{(22)(1k)} & S^0_{(22)(21)} & S^0_{(22)(22)} & \ldots & S^0_{(22)(2k)} & \ldots & S^0_{(22)(k1)} & S^0_{(22)(k2)} & \ldots & S^0_{(22)(kk)}\\
  \vdots & \vdots &    & \vdots & \vdots & \vdots &   & \vdots &  & \vdots & \vdots &   & \vdots \\
S^0_{(2k)(11)} & S^0_{(2k)(12)} & \ldots & S^0_{(2k)(1k)} & S^0_{(2k)(21)} & S^0_{(2k)(22)} & \ldots & S^0_{(2k)(2k)} & \ldots & S^0_{(2k)(k1)} & S^0_{(2k)(k2)} & \ldots & S^0_{(2k)(kk)}\\
  \vdots & \vdots &    & \vdots & \vdots & \vdots &   & \vdots &  & \vdots & \vdots &   & \vdots \\
S^0_{(k1)(11)} & S^0_{(k1)(12)} & \ldots & S^0_{(k1)(1k)} & S^0_{(k1)(21)} & S^0_{(k1)(22)} & \ldots & S^0_{(k1)(2k)} & \ldots & S^0_{(k1)(k1)} & S^0_{(k1)(k2)} & \ldots & S^0_{(k1)(kk)}\\
S^0_{(k2)(11)} & S^0_{(k2)(12)} & \ldots & S^0_{(k2)(1k)} & S^0_{(k2)(21)} & S^0_{(k2)(22)} & \ldots & S^0_{(k2)(2k)} & \ldots & S^0_{(k2)(k1)} & S^0_{(k2)(k2)} & \ldots & S^0_{(k2)(kk)}\\
  \vdots & \vdots &    & \vdots & \vdots & \vdots &   & \vdots &  & \vdots & \vdots &   & \vdots \\
S^0_{(kk)(11)} & S^0_{(kk)(12)} & \ldots & S^0_{(kk)(1k)} & S^0_{(kk)(21)} & S^0_{(kk)(22)} & \ldots & S^0_{(kk)(2k)} & \ldots & S^0_{(kk)(k1)} & S^0_{(kk)(k2)} & \ldots & S^0_{(kk)(kk)}\\
\end{array}%
\right)$}}
\vskip 0.5cm

To understand the generated submatrices better, let the $k^4$ rows (columns) be divided into $k^2$ groups of $k^2$ rows (columns) each. Note that if $i=kp_i+q_i$ ($i\in\{1,2,\ldots,k^2\}$), then the rows (columns) of the $i$-th group are indexed by squares of the form $\left(\begin{array}{cc} B_{p_i+1} & * \\ B_{q_i} & * \end{array} \right)$. As $S^0_{ij}$ verifies the vertical compatibility of $i$-th group with the $j$-th group, entries of any $S^0_{(pq)(rs)}$ characterize all rectangles of size $4l\times 2l$ whose left half (which is a rectangle of size $4l\times l$) is $\left(\begin{array}{l} B_p \\ B_q \\ B_r \\ B_s \end{array}\right)$. It is worth mentioning that for $i=kp_i+q_i,~~j=kp_j+q_j$, $S^0_{ij}$ is same as $S^0_{(p_i+1~~ q_i)(p_j+1~~ q_j)}$ and the expression above is another way of representing the same matrix.\\

Let $H^1$ be the matrix indexed by the set of rectangles of size $4l\times 2l$ indicating horizontal compatibility of the indices. As any rectangle of size $4l\times 2l$ is of the form $\left(\begin{array}{cc} B_i & B_u \\ B_j & B_v \\ B_r & B_w \\ B_s & B_z \end{array}\right)$, $H^1$ is a $k^8\times k^8$ matrix equivalently indexed by rectangles of the form  $\left(\begin{array}{cc} B_i & B_u \\ B_j & B_v \\ B_r & B_w \\ B_s & B_z \end{array}\right)$. For notational convenience, let the index $\left(\begin{array}{cc} B_i & B_u \\ B_j & B_v \\ B_r & B_w \\ B_s & B_z \end{array}\right)$ be denoted by $(ijrsuvwz)$.\\

For computational purposes, let the index set of $H^1$ be ordered using order $O_{\mathcal{V}}$. It may be noted that for any index $\left(\begin{array}{cc} B_i & B_u \\ B_j & B_v \\ B_r & B_w \\ B_s & B_z \end{array}\right)$ for the matrix $H^1$, the corresponding row can be determined as follows:
\begin{enumerate}
\item If $\left(\begin{array}{cc} B_i & B_u \\ B_j & B_v \\ B_r & B_w \\ B_s & B_z \end{array}\right)$ is forbidden, then the corresponding row is zero row.\\

\item If $\left(\begin{array}{cc} B_i & B_u \\ B_j & B_v \\ B_r & B_w \\ B_s & B_z \end{array}\right)$ is allowed then the corresponding row is $S^0_{(uv)(wz)}\bigotimes V^1$.\\
\end{enumerate}

\textbf{Finally let the index set of $H^1$ be ordered using the order $O_{\mathcal{H}}$.}\\

Note that if a $4l\times 2l$ block is not allowed then it cannot be horizontally aligned with another $4l\times 2l$ block to obtain a allowed pattern and hence the corresponding row is the zero row. For any given pattern $\mathcal{P}$, as establishing the validity of all squares of size $2l$ (or rectangles or squares of greater size) embedded in $\mathcal{P}$ is sufficient to establish the validity of $\mathcal{P}$ for $X$, $\left(\begin{array}{cc} B_i & B_u \\ B_j & B_v \\ B_r & B_w \\ B_s & B_z\end{array}\right)$ is horizontally compatible with $\left(\begin{array}{cc} B_{i'} & B_{u'} \\ B_{j'} & B_{v'} \\ B_{r'} & B_{w'} \\ B_{s'} & B_{z'}\end{array}\right)$ if and only if $\left(\begin{array}{cc} B_u & B_{i'} \\ B_v & B_{j'} \\ B_w & B_{r'} \\ B_z & B_{s'}\end{array}\right)$ and $\left(\begin{array}{cc} B_{i'} & B_{u'} \\ B_{j'} & B_{v'} \\ B_{r'} & B_{w'} \\ B_{s'} & B_{z'}\end{array}\right)$ are allowed in $X$ (under allowedness of $\left(\begin{array}{cc} B_i & B_u \\ B_j & B_v \\ B_r & B_w \\ B_s & B_z\end{array}\right)$). Finally, as entries of any submatrix $S^0_{(i'j')(r's')}$ characterizes all allowed rectangles of size $4l\times 2l$ with fixed half $\left(\begin{array}{c} B_{i'}  \\ B_{j'} \\ B_{r'} \\ B_{s'}\end{array}\right)$, $S^0_{(uv)(wz)}\bigotimes V^1$ indeed is the row (of $H^1$) corresponding to $\left(\begin{array}{cc} B_i & B_u \\ B_j & B_v \\ B_r & B_w \\ B_s & B_z\end{array}\right)$ and the computation of $H^1$ is complete. \\

\textbf{Step 3: Computation of $V^{n+1}$ and $H^{n+1}$}\\

Let us assume that the matrices $V^0,H^0,V^1,H^1,\ldots,V^n,H^n$ have been computed. Then, the matrix $H^n$ can be visualized as\\

\centerline{{$H^n = \left(%
\begin{array}{cccc}
  R^n_{11} & R^n_{12} & \ldots & R^n_{1k^{4^{n}}} \\
  R^n_{21} & R^n_{22} & \ldots & R^n_{2k^{4^{n}}}\\
  \vdots & \vdots &        & \vdots  \\
  R^n_{k^{4^{n}}1} & R^n_{k^{4^{n}}2} & \ldots & R^n_{k^{4^{n}}k^{4^{n}}} \\
\end{array}%
\right)$}}
\vskip 0.5cm

where each $R^n_{ij}$ is a $k^{4^{n}}\times k^{4^{n}}$ matrix. As $H^n$ establishes the horizontal compatibility of rectangles of size $2^{n+1} l\times 2^n l$, entries of $R^n_{ij}$ establishes the validity of squares of size $2^{n+1} l\times 2^{n+1} l$ (which are indices of $V^{n+1}$) with a fixed top half. Consequently, $R_{ij}\bigotimes H^n$ determines the rows of $V^{n+1}$ (under allowedness of the index), when the index set of $V^{n+1}$ is ordered using order $O_{\mathcal{H}}$. Further, note that the matrix $V^{n+1}$ (ordered using order $O_{\mathcal{V}}$ after computation) can be visualized as: \\

\centerline{{$V^{n+1} = \left(%
\begin{array}{cccc}
  S^{n+1}_{11} & S^{n+1}_{12} & \ldots & S^{n+1}_{1k^{2.4^{n}}} \\
  S^{n+1}_{21} & S^{n+1}_{22} & \ldots & S^{n+1}_{2k^{2.4^{n}}}\\
  \vdots & \vdots &        & \vdots  \\
  S^{n+1}_{k^{2.4^{n}}1} & S^{n+1}_{k^{2.4^{n}}2} & \ldots & S^{n+1}_{k^{2.4^{n}}k^{2.4^{n}}} \\
\end{array}%
\right)$}}
\vskip 0.5cm

where each $S^{n+1}_{ij}$ is a $k^{2.4^{n}}\times k^{2.4^{n}}$ matrix. As $V^{n+1}$ establishes the vertical compatibility of squares of size $2^{n+1} l$, entries of $S^{n+1}_{ij}$ establishes the validity of rectangles of size $2^{n+2} l \times 2^{n+1} l$ (which is an index of $H^n$) with a fixed left half. Consequently, $S^{n+1}_{ij}\bigotimes V^{n+1}$ determines the rows of $H^{n+1}$ (when the index set of $H^{n+1}$ is ordered using order $O_{\mathcal{V}}$) and thus computation of $V^{n+1}$ and $H^{n+1}$ is complete.

Finally, as entries of $V^{n+1}~~(H^{n+1})$ characterize all allowed rectangles (squares) of size $2^{n+2} l\times 2^{n+1} l$ ($2^{n+2} l\times 2^{n+2} l$ respectively), the shift space $X$ is non-empty if and only if each $V^i$ and $H^i$ are non-zero for any $i\in\mathbb{N}$. Further, as any element of $X$ can be viewed as a limit of finite blocks arising from $V^i(\text{or~~} H^i)$, elements of the shift space are precisely the limits of the blocks arising from $V^i(\text{or~~} H^i)$.
\end{proof}

\begin{Remark}
The above result provides an iterative procedure to generate arbitrarily large blocks for a shift space generated by finitely many forbidden blocks of finite size. Note that although the algorithm generates rectangles (squares) of size $2^{n+1} l\times 2^{n} l$ ($2^{n+1} l\times 2^{n+1} l$ respectively) using all squares (rectangles) of size $2^{n} l\times 2^{n} l$ ($2^{n+1} l\times 2^{n} l$ respectively), the matrices $V^i~~(H^i)$ can be indexed by a smaller collection of all allowed squares (rectangles) to generate the same set and hence the order of the matrices $V^i~~(\text{and}~~ H^i)$ can be reduced. Further, as any compatible vertical (horizontal) alignment of indices of $V^i~~(H^i)$ is an index for $H^i~~(V^{i+1})$, the number of $1$'s in $V^i~~(H^i)$ characterizes the order of $H^i~~(V^{i+1})$ (while working with matrices $V^i~~(H^i)$ of reduced size). Note that while the matrices the matrices $V^i$ (and $H^i$) are computed with the indices being ordered using $O_{\mathcal{H}}$ (and $O_{\mathcal{V}}$ respectively), the indices are re-ordered after computation using $O_{\mathcal{V}}$ (and $O_{\mathcal{H}}$ respectively). Such an arrangement is useful as such an ordering of the index set helps represent the generated matrix in the form of a block matrix where each block characterizes rectangles (squares) of larger size with a fixed left (top) half which is useful to compute the rows of the next matrix $H^n~~(V^{n+1})$ (via the product $S^n_{ij}\bigotimes V^n \text{~~and~~} (R^n_{ij}\bigotimes H^n)$ respectively). Note that while the ability to construct arbitrarily large blocks for the space $X$ guarantees non-emptiness of the shift space $X$, any element of the shift space is a limit of arbitrarily large blocks allowed for $X$. Consequently, the shift space $X$ is non-empty if and only if the matrices $V^i$ and $H^i$ are non-zero at each iteration. Further, note that  for a $d$-dimensional shift space, a similar algorithm yields a sequence of matrices $M^1_1, M^2_1,\ldots M^d_1,M^1_2, M^2_2,\ldots M^d_2\ldots,M^1_n, M^2_n,\ldots M^d_n\ldots$ (where $M^i_n$ characterises the possible extensions in the $i$-th direction at $n$-th iteration) such that the $d$-dimensional shift space is non-empty if and only if $M^i_n~~ (i=1,2,\ldots,d)$ are non-zero at each iteration. Thus we get the following corollary.\\
\end{Remark}

\begin{Cor}
For every $d$-dimensional shift of finite type, there exists a sequence of finite matrices characterizing the elements of $X$.
\end{Cor}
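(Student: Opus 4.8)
The plan is to mirror the two-dimensional construction of Theorem \ref{2d}, replacing the single pair of alternating matrices $(V^n,H^n)$ by a cyclically indexed family $(M^1_n,M^2_n,\ldots,M^d_n)$, one matrix per coordinate direction, and to run the same doubling scheme through all $d$ directions at each iteration. First I would invoke Proposition \ref{sb} to replace the given finite forbidden set by an equivalent set $\mathcal{C}$ of $d$-dimensional cubes of a single fixed length $l$, so that $X=X_{\mathcal{C}}$, and enumerate by $B_1,\ldots,B_k$ all $d$-dimensional cubes of side $l$ over $A$. These play the role of the squares in the planar proof. The decisive structural fact, identical in every dimension, is the local-to-global property of a shift of finite type generated by cubes of side $l$: a finite block is allowed for $X$ if and only if every size-$2l$ subcube it contains is allowed. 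This is exactly the property exploited when justifying the row formula $R^0_{rs}\bigotimes H^0$ in the two-dimensional argument, and it makes the compatibility bookkeeping finite in all dimensions.

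Next I would define, for each direction $i\in\{1,2,\ldots,d\}$, a matrix $M^i_n$ whose index set is the collection of allowed blocks of the current size and whose entries record \emph{compatibility along direction $i$}, i.e.\ whether two such blocks may be glued face-to-face in direction $i$ to produce an allowed block that is twice as thick in that direction. The base matrices are read directly off $\mathcal{C}$. The inductive step carries over once the operation $\bigotimes$ is generalized appropriately: gluing two blocks along direction $i$ is admissible precisely when the two constituent blocks and the overlapping interface block are all allowed, and by the local-to-global property this reduces to a product of entries of the previously computed matrices, exactly as the product $(R^0_{rs})_{uv}(R^0_{uv})_{wz}$ encoded the validity of a $4l\times 2l$ rectangle in the planar case. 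Cycling $i=1,2,\ldots,d$ within each iteration $n$ doubles the block size in each direction in turn, so that one full iteration doubles every dimension once and keeps the block shape balanced up to a bounded factor, producing the announced sequence $M^1_1,\ldots,M^d_1,M^1_2,\ldots,M^d_2,\ldots$.

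Finally I would establish the characterization itself. As in the two-dimensional case, a block of a given size is allowed if and only if it appears as a nonzero index of the appropriate matrix $M^i_n$; hence $X$ is non-empty if and only if every $M^i_n$ is nonzero at every stage. For the characterization of \emph{points}, I would use compactness of $A^{\mathbb{Z}^d}$ together with a K\"onig's-lemma / diagonal argument: a coherent sequence of ever-larger allowed blocks, each arising as a nonzero index, has a limit in $A^{\mathbb{Z}^d}$, and that limit lies in $X$ since each of its finite subpatterns is a subpattern of some allowed block and is therefore allowed; conversely every point of $X$ restricts to such a coherent sequence of indices. Thus the elements of $X$ are precisely the limits of the blocks arising from the matrices $M^i_n$, which is the assertion of the corollary.

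I expect the genuine obstacle to be combinatorial rather than dynamical. The difficulty is generalizing the two linear orders $O_{\mathcal{H}},O_{\mathcal{V}}$ and the operation $\bigotimes$ to $d$ directions so that, after each doubling step in direction $i$, re-sorting the index set exposes the block-matrix structure needed to compute the next matrix as a $\bigotimes$-product of the form $S^{\bullet}_{\bullet}\bigotimes M^i_n$. In two dimensions there are only two directions, so the roles of the row-wise and column-wise readings alternate cleanly; for general $d$ one must track $d$ distinct readings of a multidimensional block and verify that the $i$-th reading presents $M^i_n$ as a block matrix whose blocks fix the appropriate face. The dynamical input, namely the local-to-global property and the compactness argument, transfers without change, so the weight of the proof lies entirely in making this multidimensional indexing precise.
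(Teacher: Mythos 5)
Your proposal is correct and follows essentially the same route as the paper's own proof: reduce to forbidden cubes of side $l$ via Proposition \ref{sb}, define a cyclic family $M^1_n,\ldots,M^d_n$ of direction-wise compatibility matrices that double the block size in each direction per iteration (including the re-sorting of indices between directions, which the paper handles by ordering with the $(i-1)$-th direction during computation and the $(i+1)$-th direction afterwards), and characterize points of $X$ as limits of the arbitrarily large allowed blocks indexed by nonzero entries. Your added explicitness about the local-to-global property and the compactness/K\"onig's-lemma limit argument only sharpens steps the paper leaves as sketches, so no substantive difference remains.
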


\begin{proof}
The proof is a natural extension of the $2$ dimensional case and follows directly by first extending the allowed $d$-dimensional cubes in each of the $d$ directions and iteratively generating cubes (cuboids) of arbitrarily large size. For the sake of clarity, we provide an outline of the proof below.\\

{\bf Computing Generating Matrices}\\

Let $X$ be a $d$-dimensional shift space generated by a finite set of forbidden patterns. By Lemma \ref{sb}, there exists a set of forbidden cubes $\mathcal{S}$ (all of same size, say $l$) generating the shift space $X$. Let $e_1,e_2,\ldots,e_d$ be the set of $d$ mutually orthogonal directions (along the directions of standard basis vectors of $\mathbb{R}^d$) and let $B_1,B_2,\ldots,B_k$ be the the collection of all allowed cubes of size $l$. Let $M^1_1$ be the $k\times k$ matrix (indexed by $B_1,B2,\ldots,B_k$)  indicating compatibility of the cubes $B_i$ in $e_1$ direction. Consequently, entries of $M^1_1$ characterizes the validity of all blocks of size $2l\times \underbrace{l\times l\times\ldots\times l}_{d-1~~ times}$  for the space $X$. Similarly, if $M^2_1$ be the matrix indexed by all allowed blocks of size $2l\times \underbrace{l\times l\times \ldots\times l}_{d-1~~ times}$ indicating the compatibility of the indices in $e_2$ direction then, entries of $M^2_1$ characterizes the validity of all $2l\times 2l\times \underbrace{l\times l\times\ldots\times l}_{d-2~~ times}$  for the space $X$. Inductively, for $i=3,4,\ldots,d$, if $M^i_1$ is indexed by all allowed blocks of size $\underbrace{2l\times 2l\times\ldots\times 2l}_{i-1~~ times}\times \underbrace{l\times l\times\ldots\times l}_{d-i+1~~ times}$ indicating the compatibility of the indices in $i$-th direction, entries of $M^i_1$ characterize the validity of all blocks of size $\underbrace{2l\times 2l\times\ldots\times 2l}_{i~~ times}\times \underbrace{l\times l\times\ldots\times l}_{d-i~~ times}$ . In particular, $M^d_1$  is a matrix indexed by all allowed blocks of size $\underbrace{2l\times\ldots 2l}_{d-1~~ times}\times l$ indicating the compatibility of the indices in the direction $e^d$ and hence characterizes all allowed cubes of size $2l$. \\

{\bf Application of Induction and Generating Cubes (Cuboids) of Arbitrarily Large Size}\\

Let us assume that $M^1_1, M^2_1,\ldots M^d_1,M^1_2, M^2_2,\ldots M^d_2\ldots,M^1_n, M^2_n,\ldots M^d_n$ have been computed. Note that as $M^i_n$ is indexed by all allowed blocks of size $\underbrace{2^n l\times 2^nl\times\ldots\times 2^nl}_{i-1~~ times}\times \underbrace{2^{n-1}l\times 2^{n-1}l\times\ldots\times 2^{n-1}l}_{d-i+1~~ times}$, entries of $M^i_n$ characterize all allowed blocks of size $\underbrace{2^n l\times 2^nl\times\ldots\times 2^nl}_{i~~ times} \times \underbrace{2^{n-1}l\times 2^{n-1}l\times\ldots\times 2^{n-1}l}_{d-i~~ times}$.

Further, in order for the proof of Theorem \ref{2d} to be extendable to the $d$-dimensional case, while the indices of $M^i_n$ need to be ordered using $(i-1)$-th direction (mod $d$) while computing $M^i_n$, they need to be ordered using $(i+1)$-th direction (mod $d$) after computation, to facilitate the computation of the next matrix. Finally, note that as the sequence $M^i_n$ generates arbitrarily large cubes (and cuboids), the process yields an element of the shift if each $M^i_n$ is non-empty. Further, as any element of sequence can be viewed as a limit of arbitrarily large cubes (or cuboids), the elements of the shift space are characterized by the sequence generated.
\end{proof}

\begin{Remark}
The above corollary extends the algorithm given in Theorem \ref{2d} for generating elements of $d$-dimensional shift space. The algorithm generates cubes (cuboids) of arbitrarily large size by iteratively extending the allowed blocks in each of the $d$ directions. Note that as compatibility of two blocks in one of the directions cannot indicate its extension in any other direction, the matrices $M^1_1,M^2_1,\ldots,M^d_1$ are independent of each other (cannot be determined from each other). Further, as the size of the matrices $B_1,B_2,\ldots,B_k$ is the maximum possible length (in any of the directions) of the generating set of forbidden patterns $\mathcal{P}$, to examine the validity of any given pattern in $\mathbb{R}^d$, it is sufficient to examine the validity of all cubes of size $2l$ (or cubes/cuboids of larger size). Finally, as any element of $X$ can be visualized as a limit of finite cubes (cuboids) and generation of arbitrarily large cubes (cuboids) yields an element of $X$, the sequence generated characterizes the elements of $X$. To realize the convergence mathematically, one may visualize the allowed cube as an element which is obtained by placing the cube (center of the cube) at the origin and assigning $0$ at all the other places. Consequently, a sequence of arbitrarily large blocks yields a cauchy sequence in the full shift without forbidden blocks in the central cube (which is of arbitrarily large size) and hence converges to an element of $X$.
\end{Remark}

\bibliography{xbib}

\end{document}